\newtheorem{thm}{Theorem}
\title{Congruence Lattices of Certain Finite Algebras with Three Commutative Binary Operations}
\author{Brian T. Chan}
\date{July 27 - August 19, 2014}
\begin{document}

\maketitle

\begin{abstract}

A partial algebra construction of Gr\"atzer and Schmidt from $\cite{CLA}$ is adapted to provide an alternative proof to a well-known fact that every finite distributive lattice is representable, seen as special case of the Finite Lattice Representation Problem. \newline

The construction of this proof brings together Birkhoff's representation theorem for finite distributive lattices, an emphasis on boolean lattices when representing finite lattices, and a perspective based on inequalities of partially ordered sets. It may be possible to generalize the techniques used in this approach. \newline

Other than the aforementioned representation theorem only elementary tools are used for the two theorems of this note. In particular there is no reliance on group theoretical concepts or techniques \cite{ISLG}, or on well-known methods, used to show certain finite lattices to be representable \cite{CLFA}, such as the closure method.

\footnotetext {The theorem was formulated while holding a summer 2014 NSERC USRA, supervised by Claude Laflamme and Robert Woodrow.}

\end{abstract}

\section{Introduction}

The Finite Lattice Representation Problem asks if, given a finite lattice $L$, is $L$ representable (see \cite{CLFA}). That is, is there a finite algebra $A$ where $L$ is isomorphic to the congruence lattice of $A$. So far the problem is still open. See \cite{ISLG}, \cite{CLFA} and \cite{CLFUA} for explorations into the general case.\footnote{In \cite{ISLG} (a paper by P\'alfy and Pud\'lak), it is shown that every finite lattice is representable exactly when every finite lattice is isomorphic to an interval in the subgroup lattice of a finite group. And in \cite{CLFA} (DeMeo's PhD thesis) every lattice (up to isomorphism) with at most seven elements was, with at most one possible exception, shown to be representable.} To the best of our knowledge, this problem was first stated in \cite{CLA} by Gr\"atzer and Schmidt in 1963 as an open problem. \newline

The special case of the Finite Lattice Representation Problem investigated in this note asks if every finite distributive lattice can be represented by some finite algebra. In the 1940's, Robert P. Dilworth proved a stronger variant of this special case. Called Dilworth's Theorem $\cite{CLL}$, it states that every finite distributive lattice can be represented as the congruence lattice of some finite lattice.\footnote{He had in mind a conjecture $\cite{CLL}$, called the congruence lattice problem. The problem investigates the limitations of what congruence lattices of lattices could be by asking whether every algebraic distributive lattice is isomorphic to the congruence lattice of some lattice. In 2007 Friedrich Wehrung \cite{SDCLP} showed a counterexample by constructing an uncountably infinite distributive algebraic lattice that is not a congruence lattice of any lattice.} There has been a lot of research into congruence lattices of finite lattices (see this book by Gr\"atzer \cite{CFL}) and on finite algebras whose congruence lattices are distributive, modular, upper semi-modular, or lower semi-modular (see Berman's PhD thesis \cite{CLFUA}). \newline

The algebras described in the two theorems of this note are not lattices, each consists of a join semilattice along with two other commutative binary operations, and so provide an alternative different from various approaches, see \cite{CFL} for techniques used to prove stronger versions of Dilworth's Theorem, considered when looking at congruence lattices of finite lattices. \newline

Two theorems will be shown. The first describes representations of a finite distributive lattice $D$ using two algebras $E$ and $E^\prime$ where $D \cong$ Con $E \cong$ Con $E^\prime$; $E$ has $L$ as its set of elements and $E^\prime$ has a finite boolean lattice with $n$ atoms (with $n$ being the number of join irreducible elements of $D$) as its set of elements. Congruences on these algebras will be shown to be generalizations of congruences on finite boolean lattices. \newline

The second adds to the first by intertwining this representation with the partial orders induced by the semilattice operations of $E$ and $E^\prime$. It appears to indicate how much the representing algebras are like the distributive lattice being represented. \newline

In this note the set theoretic symbols $\mathcal{P}$, $\subset$, $\subseteq$, $\Delta$, $\cup$, $\cap$, and $\backslash$ will be used. Mostly following \cite{ILO}, an important structure, the lattice, will be introduced below along with some related concepts. \newline 

Let $P$ be a set, then a binary relation $\leq$ $\subseteq P \times P$ is a partial order on $P$ if and only if $\leq$ is reflexive, transitive, and antisymmetric. Then $(P, \leq)$, more briefly $P$, is a partially ordered set partially ordered by $\leq$.\footnote{Depending on context $P$ may stand for a set or for $(P, \leq)$.} We will write $a \leq b$ to mean that $(a,b) \in$ $\leq$ and $a < b$ to mean that $a \leq b$ and $a \neq b$. The following is called an interval. If $P$ is a partially ordered set, $a,b \in P$, and $a \leq b$ then define $[a,b] = \{x \in P : a \leq x \leq b \}$. \newline

If, where $P$ is a partially ordered set, $P$ has an element $x \in P$ such that $x \leq y$ for all $y \in P$ then say that $x$ is the bottom of $P$, and denote this bottom by $0$. A top element is defined dually with respect to the partial order of $P$ and is denoted $1$ when it exists. Moreover, if $x,y \in P$ say that $y$ covers $x$ iff $x < y$ and there is no $z \in P$ where $x < z < y$, say that $x$ and $y$ are comparable iff $x \leq y$ or $y \leq x$, and write $x \parallel y$ if $x$ and $y$ are incomparable.\footnote{$x$ and $y$ are comparable iff $x \leq y$ or $y \leq x$} \newline

A lattice is a partially ordered set $P$ that has two binary operations, the meet ($\wedge : P \times P \rightarrow P$) and the join ($\vee : P \times P \rightarrow P$), where $a \wedge b$ is the greatest lower bound of $\{a,b\}$ and $a \vee b$ is the least upper bound of $\{a, b\}$.\footnote{As binary operations, both the meet and the join are commutative, idempotent and associative.} With $L$ being a lattice, a subset $S \subseteq L$ is a sublattice of $L$ exactly when it is closed under the meet and join operations of $L$. \newline

Let $L$ be a lattice and $S \subseteq L$ a finite subset (so that $S = \{s_1, \dots, s_n\}$), then $\bigvee S$ stands for the least upper bound of all the elements of $S$ (which exists). The element $\bigvee S$ can also be written $\bigvee_{k=1}^n s_k$.\footnote{Since joins are associative it can be deduced that $\bigvee S = s_1 \vee \dots \vee s_n$.} A similar meaning (dual with respect to the partial order of $L$) applies to the symbol $\bigwedge S$.  \newline

An equivalent algebraic definition of a lattice $L$ is as follows. A lattice is the algebra $\langle L ; \wedge, \vee \rangle$, being a set $L$ equipped with two commutative, idempotent, and associative binary operations $\wedge : L \times L \rightarrow L$ and $\wedge : L \times L \rightarrow L$ (called the meet and join respectively) such that the absorption laws hold: for all $a, b \in L$, $a \vee (a \wedge b) = a \wedge (a \vee b) = a$.\footnote{Depending on context, $L$ may stand for a set, $(L, \leq)$, or $\langle L ; \wedge, \vee \rangle$. The partial order of $L$ that would make this meet and join conform to the previous definition can be obtained by having, $a \leq_1 b$ iff $a = a \wedge b$, or by having $a \leq_2 b$ iff $b = a \vee b$. Note that $\leq_1$ $=$ $\leq_2$.} See \cite{ILO} for more on this equivalence \newline

Call a lattice that has both a top and a bottom element bounded, and call a lattice complemented if it is bounded and every element has a complement; that is for all $x \in L$ with $L$ being a bounded lattice having top $1$ and bottom $0$, a complement is an element $y \in L$ where $x \wedge y = 0$ and $x \vee y = 1$. Furthermore, a uniquely complemented lattice is defined to be a bounded lattice in which every element has a unique complement. \newline

Considering the lattice $L$, an element $x \in L$ is join irreducible if and only if $x = a \vee b$ in $L$ implies $x = a$ or $x = b$, and an atom if and only if $L$ has a bottom $0$ that is covered by $x$.  In particular, an atom is always join-irreducible; though the converse is not true. Denote the partial order of join irreducible elements partially ordered by the partial order of $L$ by $J(L)$.\footnote{Depending on context, $J(L)$ will be interpreted as a set or as a partial order.} \newline

Let $L$ be a lattice, then it is called distributive exactly when it satisfies the following distributive laws: for all $a,b,c \in L$, $a \vee (b \wedge c) = (a \vee b) \wedge (a \vee c)$ and $a \wedge (b \vee c) = (a \wedge b) \vee (a \wedge c)$. A lattice is called a boolean lattice precisely when it is distributive and complemented.\footnote{In a boolean lattice, the distributive laws can be used to show that every element has a unique complement, so a boolean lattice is uniquely complemented.} A very important property is that in any boolean lattice, a join irreducible element is an atom. \newline

Going back a bit, a semilattice is a partially ordered set that is closed under meets or under joins.\footnote{Like with lattices, the binary operation (being the meet or the join) of a semilattice is commutative, idempotent and associative. Furthermore, the symbols $\bigvee T$ and $\bigwedge T$ for finite subsets $T$ of a semilattice $S$ are defined as they were for lattices. } If joins/meets are to be emphasized call the structure a join/meet semilattice. A lattice can be interpreted as being a join and a meet semilattice being fused together. \newline

Like with a lattice, a semilattice can be defined in two equivalent ways. Algebraically, it can be defined as being a set with a binary operation $* : S \times S \rightarrow S$ that is commutative, idempotent and associative.\footnote{From an algebraic definition of a semilattice, partial orders that would have the semilattice operation conform with the previous definition would be one of the following. When $* = \wedge$ have $a \leq b$ iff $a = a * b$, and when $* = \vee$ have $a \leq b$ iff $b = a * b$.} \newline

Going in another direction, let $A$ be an algebra. Then a congruence $\theta$ on $A$ is a partition of $A$ where for any $n$-ary operation $f$ of A, $x_i \equiv y_i$  $(\theta)$ for $i = 1, \dots, n$ implies that $f(x_1, \dots x_n) \equiv f(y_1, \dots y_n)$ $(\theta)$; \footnote{Write $x \equiv y$ $(\theta)$ to mean that $x$ and $y$ are in the same cell of $\theta$.} and the congruence lattice of $A$, Con $A$ with partial order $\leq$, is the partially ordered set of congruences of $A$ where $\theta \leq \phi$ if and only if $\theta$ is a refinement of $\phi$. \newline

Now, let $P$ be a partially ordered set. Then write $\mathcal{O}(P)$ to denote the lattice of subsets of $P$ closed downward under the partial order of $P$ where the meet and join are, for all $A,B \in \mathcal{O}(P)$, $A \wedge B = A \cap B$ and $A \vee B = A \cup B$ respectively.\footnote{Depending on context, $\mathcal{O}(P)$ will be interpreted as a set or as a lattice.} \newline

With $P$ as above let $S \subseteq P$. Then define $\downarrow_P S = \bigcap \{ X \in \mathcal{O}(P) : S \subseteq X \}$ and let $S^M$ denote the set of maximal, with respect to the partial order of $P$, elements of $S$.\footnote{In particular, $\downarrow_P \varnothing = \varnothing$. To remove possible ambiguity, let $P$ be a partially ordered set and $S \subseteq P$. Then $\downarrow_P S$ will also be denoted by $\downarrow_P (S)$.} \newline

Birkhoff's representation theorem for finite distributive lattices shows that if $D$ is a finite distributive lattice then $\mathcal{O}(J(D)) \cong D$, the identification being $X \mapsto \bigvee X$ for all $X \in \mathcal{O}(J(D))$. In particular, the operations of $D$ are identified with intersections and unions on $\mathcal{O}(J(D))$. This representation theorem indicates that finite distributive lattices are sublattices of finite power sets; in particular, every finite boolean lattice $B$ is isomorphic to the powerset of the (finite) set of atoms of $B$.

\section{The Finite Algebras}

Given a finite distributive lattice $D$ with $n$ join irreducible elements, we construct algebras $E$ and $E^\prime$ such that $D \cong$ Con $E \cong$ Con $E^\prime$. And it will be a generalization of the following: It is well-known that if $B$ is a finite boolean lattice then Con $B \cong B$ and that every congruence on $B$ is uniquely determined by the cell that contains its bottom element $0$. \newline

Looking at Birkhoff's representation, let $D$ be a finite distributive lattice. Then $\langle \mathcal{O}(J(D)) ; \cap, \cup \rangle$ is a distributive lattice that is isomorphic to $D$ and, with $|J(D)| = n$, $\langle \mathcal{P}(J(D)) ; \cap, \cup \rangle$ is a boolean lattice isomorphic to a finite boolean lattice with $n$ atoms since $\mathcal{P}(J(D)) = \mathcal{O}(J(B_n))$. The algebra $E$ is built from  $\langle \mathcal{O}(J(D)) ; \cap, \cup \rangle$ while $E^\prime$ is built from $\langle \mathcal{P}(J(D)) ; \cap, \cup \rangle$. \newline

Below are two theorems. The first establishes how the finite distributive lattices can be represented, identifies the congruences of these algebras, and reveals some algebraic properties of the representing structures. The second adds to the first by describing how inequalities on these algebras can be used to learn more about these algebras.

\begin{thm}
Let $\langle D ; \wedge , \vee \rangle$ be a finite distributive lattice with partial order $\leq$, and $\langle B_n ; \curlywedge, \curlyvee \rangle$ be a finite boolean lattice with $n = |J(D)|$ atoms and partial order $\preceq$. \newline

Then there are algebras $E = \langle D ; \#, \$, \vee \rangle$ and $E^\prime = \langle B_n ; (\#), (\$), \curlyvee \rangle$ where $D \cong$ Con $E \cong$ Con $E^\prime$. \newline

With $(\#)$, $(\$)$, and $\curlyvee$ corresponding to $\#$, $\$$ and $\vee$ respectively, there is an onto homomorphism from $E^\prime$ onto $E$. And looking at the operations all are binary and commutative, both $\$$ and $\#$ are idempotent, $(\#)$ is associative, and for all $a \in B_n$: $(a$ $(\#)$ $a)$ $(\#)$ $a = a$ $(\#)$ $a = a$ $(\$)$ $a = (a$ $(\$)$ $a)$ $(\$)$ $a$. \newline

Furthermore, the congruences of $E$ are the partitions $\theta_a$ where for all $a \in L$: $\theta_a = \{ [0, a] \} \cup \{ x \vee [0, a] : \exists x_1, \dots x_n \in J(D) \backslash \downarrow_{J(D)} \{a\} \text{ } (x = \bigvee_{i=1}^n x_i) \}$ and the congruences of $E^\prime$ are the partitions $\theta_A^\prime$ where for all $A \subseteq J(B_n)$: $\theta_A = \{ [0, \curlyvee A] \} \cup \{ x \curlyvee [0, \curlyvee A] : \exists S \subseteq J(B_n) \backslash A \text{ } (x = \curlyvee S) \}$ 

\end{thm}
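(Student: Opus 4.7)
\emph{Plan.} The strategy is to exploit Birkhoff's representation throughout: identify $D$ with $\mathcal{O}(J(D))$ (so that $\vee$ becomes union) and $B_n$ with $\mathcal{P}(J(D)) = \mathcal{O}(J(B_n))$. Under these identifications, the natural surjection $\pi : \mathcal{P}(J(D)) \to \mathcal{O}(J(D))$ given by $S \mapsto \downarrow_{J(D)}(S)$ is the candidate onto homomorphism $E^\prime \to E$. The two candidate congruences $\theta_a$ on $E$ and $\theta^\prime_A$ on $E^\prime$ then have a uniform description in this picture: the bottom cell is the principal ideal $[0,a]$ (respectively $[0,\curlyvee A]$), and every non-bottom cell consists of a join of join-irreducibles avoiding $\downarrow_{J(D)}\{a\}$ (respectively avoiding $A$), shifted up by $[0,a]$.

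First I would define $\#$ and $\$$ on $D$ by case splits keyed to the presentation of each element as the join of its join-irreducible summands relative to any potential ``pivot'' $a$, and then define $(\#)$ and $(\$)$ on $B_n$ so that $\pi$ is a homomorphism for the triple $(\#, \$, \vee) \leftrightarrow ((\#), (\$), \curlyvee)$. The identities listed in the statement -- commutativity of all six operations, idempotency of $\#$ and $\$$ on $E$, associativity of $(\#)$ on $E^\prime$, and the near-idempotency chain $(a(\#)a)(\#)a = a(\#)a = a(\$)a = (a(\$)a)(\$)a$ on $E^\prime$ -- then reduce to direct checks from the definitions. The fact that $\#$ and $\$$ are idempotent on $D$ but $(\#)$ and $(\$)$ only satisfy the weaker chain on $B_n$ reflects that $\pi$ collapses non-ideal subsets to their downward closures, so some identities of $E$ are forced to hold only after one application of the operation on $E^\prime$.

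Next I would verify that each $\theta_a$ is a congruence of $E$. Closure of the cells under $\vee$ is immediate, since $[0,a]$ is a sub-join-semilattice and $(x \vee [0,a]) \vee (y \vee [0,a]) = (x \vee y) \vee [0,a]$ with $x \vee y$ again a join of join-irreducibles outside $\downarrow_{J(D)}\{a\}$; closure under $\#$ and $\$$ is built into the definitions. For the converse, given an arbitrary congruence $\theta$ of $E$, I would set $a = \bigvee [0]_\theta$ and argue that $\$$ forces $[0]_\theta$ to coincide with the whole principal ideal $[0,a]$, after which $\#$ forces every remaining cell to have the form $x \vee [0,a]$ for a suitable join $x$ of join-irreducibles outside $\downarrow_{J(D)}\{a\}$. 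The assignment $a \mapsto \theta_a$ is then an order isomorphism $D \to \mathrm{Con}\, E$, since $a \leq b$ iff $[0,a] \subseteq [0,b]$ iff $\theta_a$ refines $\theta_b$. Running the same argument on $B_n$, and using that $\pi$ is a homomorphism, identifies $\theta^\prime_A$ with the pullback of $\theta_{\curlyvee A}$ and yields $D \cong \mathrm{Con}\, E^\prime$ as well.

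The main obstacle I anticipate is the simultaneous design of $\#$ and $\$$: the two operations must be rich enough to rule out every partition other than the $\theta_a$ as a congruence -- this is where the need for \emph{two} auxiliary operations alongside $\vee$ becomes apparent, one to force a principal ideal as the bottom cell and one to force the higher cells into the prescribed shape -- yet restrictive enough that the listed identities remain valid and that $\pi$ survives as a homomorphism. Once these definitions are pinned down, the verifications above are systematic case analyses. This careful adaptation of the Gr\"atzer--Schmidt partial-algebra construction cited in the abstract is what drives the whole representation.
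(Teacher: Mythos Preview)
Your framework is right --- Birkhoff's representation, the surjection $\pi(S) = \downarrow_{J(D)} S$, and the ``bottom cell determines the congruence'' strategy all match the paper. The genuine gap is that you never actually define $\#$, $\$$, $(\#)$, $(\$)$. You describe them as ``case splits keyed to the presentation of each element \dots\ relative to any potential pivot $a$'', but an operation cannot be defined relative to a yet-unspecified congruence; it must be a single uniform binary map on the underlying set. Everything in the theorem --- the listed identities, the homomorphism, and above all the classification of congruences --- rests on the specific formulas, and you have left exactly this as ``the main obstacle I anticipate''. That is the content of the proof, not a detail to be filled in later.

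The paper's construction goes in the opposite direction from yours: it defines $(\#)$ and $(\$)$ on $\mathcal{P}(J(D))$ \emph{first}, using the maximal-element operator $X \mapsto X^M$ (maximal with respect to the order on $J(D)$), namely
\[
X \,(\#)\, Y = X^M \cap Y^M, \qquad X \,(\$)\, Y = (X^M \cap \downarrow Y) \cup (\downarrow X \cap Y^M),
\]
and then sets $A \,\#\, B = \downarrow(A \,(\#)\, B)$ and $A \,\$\, B = \downarrow(A \,(\$)\, B)$ on $\mathcal{O}(J(D))$. This order matters: since $\pi$ is not injective, you cannot lift operations from $E$ to $E'$ just by requiring $\pi$ to be a homomorphism --- there are many such lifts, and most will not have the right congruence lattice. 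The key device $X^M$ is what encodes the order of $J(D)$ into operations on the unordered power set, and it is precisely this that makes the congruences of $E'$ come out as $X^M \,\Delta\, Y^M \subseteq I$ for $I \in \mathcal{O}(J(D))$. Once you have these formulas, the role of $\$$ (via $A \,\$\, \downarrow\{a\}$) is to force the zero-block to be a down-set, and the role of $\#$ (via $X \,\#\, \downarrow(X^M \Delta Y^M)$) is to reduce any relation $X \equiv Y$ to one with disjoint maximal sets; your sketch of this part is broadly correct but cannot be executed without the explicit operations.
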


\begin{proof}

Below, $\mathcal{O}(J(D))$ will be used instead of $D$ and $\mathcal{P}(J(D))$ will be used instead of $B_n$. Throughout, set $\downarrow$ $=$ $\downarrow_{J(D)}$ and evaluate $S^M$ with respect to the order of $J(L)$. Now define $F = \langle \mathcal{O}(J(D)) ; \#, \$, \cup \rangle$ and $F^\prime = \langle \mathcal{P}(J(D)) ; (\#), (\$), \cup \rangle$ both ordered by $\subseteq$; they can be used in place of $E$ and $E^\prime$. \newline
 \newline
For all $X, Y \in \mathcal{P}(J(D))$ define \newline
 \newline
$X$ $(\#)$ $Y =$ $X^M$ $\cap$ $Y^M$ \newline
 \newline
$X$ $(\$)$ $Y =$ $(X^M \cap$ $\downarrow Y) \cup ($ $\downarrow X \cap Y^M)$ \newline 

It can be observed that $\downarrow X$ $\cup$ $\downarrow Y =$ $\downarrow ($ $(\downarrow X)^M$ $\cup$ $(\downarrow Y)^M) =$ $\downarrow (X \cup Y)$. Now for all $A, B \in \mathcal{O}(J(L))$ define $A$ $\#$ $B =$ $\downarrow (A$ $(\#)$ $B)$ and $A$ $\$$ $B =$ $\downarrow (A$ $(\$)$ $B)$. An immediate consequence is that $A$ $\$$ $B =$ $\downarrow((A^M \cap$ $B) \cup ($ $A \cap B^M))$. \newline 
 
Identifying $D$ with $\mathcal{O}(J(D))$ and $B_n$ with $\mathcal{P}(J(D))$, after using implicitly a fixed but arbitrary bijection $m : J(B_n) \rightarrow J(L)$, define $f : E^\prime \rightarrow E$ by $f(X) =$ $\downarrow X$ for all $X \in \mathcal{P}(J(D))$. \newline 
 
This can be seen to be an onto homomorphism as specified in the theorem. It is evident that both $(\$)$ and $(\#)$ are commutative, that for all $A \in \mathcal{P}(J(L))$ $(A$ $(\#)$ $A)$ $(\#)$ $A = A$ $(\#)$ $A = A$ $(\$)$ $A = (A$ $(\$)$ $A)$ $(\$)$ $A$, and that $\#$ and $\$$ are both idempotent. As $(X$ $(\#)$ $Y)$ $(\#)$ $Z = X^M \cap Y^M \cap Z^M$ and similarly for $X$ $(\#)$ $(Y$ $(\#)$ $Z)$, $(\#)$ is also associative. \newline
 \newline
Next we argue the representation $\mathcal{O}(J(D)) \cong$ Con $F \cong$ Con $F^\prime$. \newline 

Let $\theta \in$ Con $F$. Assume that $X \equiv Y$ $(\theta)$ and $X^M \cap Y^M = \varnothing$. Then $X = $ $\downarrow (X^M \cap X^M) \equiv$ $\downarrow (X^M \cap Y^M) = \varnothing$ $(\theta)$ and similarly for $Y$. Next it will be shown that $X \equiv Y$ $(\theta)$ iff $\downarrow (X^M \backslash Y^M) \equiv$ $\downarrow(Y^M \backslash X^M)$ $(\theta)$. \newline

Assume that $X \equiv Y$ $(\theta)$, then $\downarrow (X^M \backslash Y^M) = X$ $\#$ $\downarrow((X^M \cup Y^M) \backslash (X^M \cap Y^M)) \equiv Y$ $\#$ $\downarrow((X^M \cup Y^M) \backslash (X^M \cap Y^M)) =$ $\downarrow(Y^M \backslash X^M)$ $(\theta)$. Conversely, $\downarrow(X^M \backslash Y^M) \equiv$ $\downarrow (Y^M \backslash X^M)$ $(\theta)$ implies $X =$ $\downarrow(X^M \backslash Y^M) \cup (X \# Y) \equiv$ $\downarrow(Y^M \backslash X^M) \cup (X \# Y) = Y$ $(\theta)$. \newline

When $\theta \in$ Con $F^\prime$ argue like before (starting with the case $X^M \cap Y^M = \varnothing$) except restrict $X$ and $Y$ in the above by having $X, Y \in \{S^M : S \in \mathcal{P}(J(L)) \}$. \newline

Consider $I(\theta)$, the block of $\theta$ containing $\varnothing$. If $\varnothing \neq A \in I(\theta)$ and $a \in$ $A$ then observe that $\downarrow \{a\} = A$ $\$$ $\downarrow \{a\} \equiv \varnothing$ $\$$ $\downarrow \{a\} = \varnothing$ $(\theta)$. With $F^\prime$ replace $\downarrow \{a\}$ with $\{a\}$. Now have $I = \cup I(\theta)$. Then $I(\theta) = \mathcal{O}(I)$ when considering $F$ and $I(\theta) = \mathcal{P}(I)$ when considering $F^\prime$. The other cells then take the form $\{X \cup Y : Y \in I(\theta) \}$ for $X \subseteq J(L)$ and $X \nsubseteq I$ (replace $X$ with $\downarrow X$ when considering $F$). \newline

In either case, using the binary operation $\cup$ with the above shows that $\theta$ is uniquely determined by $I(\theta)$ and that $I(\theta)$ is identified with an element of $\mathcal{O}(J(L))$. \newline

Now let $I \in \mathcal{O}(J(D))$ and consider the partition $\phi$ on $\mathcal{O}(J(L))$ where $X \equiv Y$ $(\phi)$ if and only if $X^M \Delta Y^M \subseteq I$. Let $A,X,Y \in \mathcal{O}(J(L))$ and $X \equiv Y$ $(\phi)$. Then $(A \cup X)^M \Delta (A \cup Y)^M \subseteq (A^M \cup X^M) \Delta (A^M \cup Y^M) \subseteq I$, $(A$ $\#$ $X)^M \Delta (A$ $\#$ $Y)^M = (A^M \cap X^M) \Delta (A^M \cap Y^M) \subseteq I$, and $(A$ $\$$ $X)^M \Delta (A$ $\$$ $Y)^M \subseteq$ $\downarrow X$ $\Delta$ $\downarrow Y \subseteq$ $\downarrow(X^M \Delta Y^M) \subseteq I$. So $\phi \in$ Con $F$. With $F^\prime$ use the same definition when making a partition $\phi$ of $\mathcal{P}(J(L))$ from a given $I \in \mathcal{O}(J(L))$, namely $X^M \Delta Y^M \subseteq I$.
\end{proof}

The smaller algebra, $E$, is a homomorphic image of $E^\prime$ just like how $\langle L ; \vee \rangle$ is a homomorphic image of $\langle B_n; \curlyvee \rangle$ (with $n = |J(D)|$); we show how certain inequalities involving $\leq$ and $\preceq$ can be used to gauge, with $D = E \cong E^\prime$ being at one extreme, how much $E$ and $E^\prime$ are like $D$.
\begin{thm}
Let $\langle D ; \wedge , \vee \rangle$ be a finite distributive lattice with partial order $\leq$, and $\langle B_n ; \curlywedge, \curlyvee \rangle$ be a finite boolean lattice with $n = |J(D)|$ atoms and partial order $\preceq$. Then there are algebras $E$ and $E^\prime$, both being as specified in the preceding theorem, such that when viewed as pairs $(E, \leq)$ and $(E^\prime, \preceq)$ the following can be said. Some governing inequalities are as follows, for all $a, b \in B_n$ and $c, d \in L$:
\begin{center}
$a$ $(\#)$ $b \preceq a$ $(\$)$ $b \preceq a \curlyvee b$ and $a$ $(\#)$ $b \preceq a \curlywedge b \preceq a \curlyvee b$
\end{center}
\begin{center}
$c$ $\#$ $d \leq c$ $\$$ $d \leq c \wedge d \leq c \vee d$
\end{center}
Furthermore, the following four properties are equivalent:

\begin{enumerate}
\item for all $a,b \in B_n$: $a \curlywedge b \preceq a$ $(\$)$ $b$
\item $\$$ is associative
\item $\$ = \wedge$
\item it is impossible to find three elements $x,y,z \in J(L)$ where $y \parallel z$ and $x \leq y \wedge z$
\end{enumerate}
And the following four properties are equivalent:

\begin{enumerate}
\item for all $a,b \in B_n$: $a$ $(\$)$ $b \preceq a \curlywedge b$
\item $(\#)$ and $(\$)$ are idempotent
\item $\$ = \#$
\item $D$ is a boolean lattice
\end{enumerate}
\end{thm}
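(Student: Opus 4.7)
The theorem has three pieces: the governing inequalities and two four-way equivalences. Throughout I would transport via the identifications $D \cong \mathcal{O}(J(D))$ and $B_n \cong \mathcal{P}(J(D))$ so that operations become the set-level formulas from the proof of Theorem 1. Each of the four governing inequalities then reduces to a direct set containment from $X^M \subseteq X \subseteq \downarrow X$ and the fact that elements of $\mathcal{O}(J(D))$ are already downward closed: for instance $X^M \cap Y^M \subseteq X^M \cap \downarrow Y$ gives $a\,(\#)\,b \preceq a\,(\$)\,b$, and $X^M \subseteq X$ with $Y^M \subseteq Y$ gives $a\,(\$)\,b \preceq a \curlyvee b$ as well as $a\,(\#)\,b \preceq a \curlywedge b$; the $E$-level inequalities follow by monotonicity of $\downarrow$.

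For the first four-way equivalence I would prove the cycle (iv) $\Rightarrow$ (iii) $\Rightarrow$ (ii) $\Rightarrow$ (iv) and then fit (i) in. The crux is (iv) $\Rightarrow$ (iii): for $y \in A \cap B$ with $A, B \in \mathcal{O}(J(D))$, choose $u \in A^M$ and $v \in B^M$ dominating $y$; by (iv) applied to the triple $(y, u, v)$, the pair $u, v$ cannot be incomparable, so WLOG $u \leq v$, and downward-closure of $B$ then forces $u \in A^M \cap B$, whence $y \in \downarrow(A^M \cap B) \subseteq A\,\$\,B$. Combined with the governing inequality this yields $\$ = \wedge$. The step (iii) $\Rightarrow$ (ii) is immediate. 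For (ii) $\Rightarrow$ (iv) I argue contrapositively: given $x \leq y \wedge z$ with $y \parallel z$ in $J(D)$, set $A = \downarrow y$, $B = \downarrow z$, $C = \downarrow x$; then $(A\,\$\,B)\,\$\,C = \varnothing\,\$\,C = \varnothing$ while $A\,\$\,(B\,\$\,C) = A\,\$\,C = C \neq \varnothing$, contradicting associativity.

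For the second four-way equivalence the driving identity is $X\,(\#)\,X = X\,(\$)\,X = X^M$, which makes each of the two idempotence statements in (ii) equivalent to $X = X^M$ for every $X \in \mathcal{P}(J(D))$, i.e., $J(D)$ is an antichain, equivalently (via Birkhoff) $D$ is a boolean lattice. When $J(D)$ is an antichain, $\#, \$, (\#), (\$)$ all collapse to ordinary intersection, yielding (i) and (iii) at once. Conversely, any pair $x < y$ in $J(D)$ produces concrete separators: $\downarrow x\,\#\,\downarrow y = \varnothing \neq \downarrow x = \downarrow x\,\$\,\downarrow y$ refutes (iii), and $\{y\}\,(\$)\,\{x, y\} = \{x, y\}$ strictly exceeds $\{y\} \curlywedge \{x, y\} = \{y\}$, refuting (i).

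The main obstacle I expect is placing (i) of the first list in the above cycle. Because (i) concerns $(\$)$ on the full powerset $\mathcal{P}(J(D))$ while (iii) concerns $\$$ on the strictly smaller $\mathcal{O}(J(D))$, the downward-closure that drives the (iv) $\Rightarrow$ (iii) argument is not available on $B_n$, and the specialization $X = Y = J(D)$ in (i) already forces $J(D) = J(D)^M$; reconciling this antichain conclusion with the weaker ``no V'' condition of (iv) will be the delicate point, and the route I would take is to re-examine the quantifier range in (i) and, if necessary, pass through the boolean equality $(\$) = \curlywedge$ established in the second list.
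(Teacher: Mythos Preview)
Your handling of the governing inequalities and the cycle (iv)$\Rightarrow$(iii)$\Rightarrow$(ii)$\Rightarrow$(iv) in the first list is correct and in fact cleaner than the paper's: the paper reaches (iii) only by first establishing a triple formula $(A\ \$\ B\ \$\ C)^M=(A^M\cap B\cap C)\cup(A\cap B^M\cap C)\cup(A\cap B\cap C^M)$ under the forest-like hypothesis and then specialising $B=C$, whereas your direct ``pick maximal dominators $u,v$ and use comparability'' argument gets $\$=\wedge$ in one step. Your associativity counterexample $A=\downarrow y$, $B=\downarrow z$, $C=\downarrow x$ is the paper's, relabelled.

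Your suspicion about item~(i) of the first list is not a mere obstacle to your write-up; it is a genuine defect in the statement. Taking $X=Y$ in (i) gives $X\subseteq X\,(\$)\,X=X^M$, hence $X=X^M$ for every $X\in\mathcal{P}(J(D))$, which forces $J(D)$ to be an antichain, i.e.\ $D$ boolean. So (i) is strictly stronger than (iv). Concretely, with $J(D)=\{a,b\}$, $a<b$, the poset is forest-like but $\{a,b\}\cap\{a,b\}=\{a,b\}\not\subseteq\{b\}=\{a,b\}\,(\$)\,\{a,b\}$. The paper's argument for (iv)$\Rightarrow$(i) asserts that $x\in(X\cap Y)\setminus(X\,(\$)\,Y)$ yields witnesses $y>x$ in $X$ and $z>x$ in $Y$ and then claims this contradicts forest-likeness, but it never secures $y\parallel z$; the chain example above shows that step fails. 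There is nothing for you to repair here --- the equivalence as written does not hold, and the natural fix is to replace (i) by the $E$-level inequality $c\wedge d\le c\ \$\ d$ on $D$, which is exactly (iii).

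For the second list your scheme is right, but your separator for (i) is miscomputed: with $x<y$ one has $\{x,y\}^M=\{y\}$, so $\{y\}\,(\$)\,\{x,y\}=\{y\}=\{y\}\cap\{x,y\}$, not $\{x,y\}$. Use the paper's pair $X=\{x\}$, $Y=\{y\}$ instead: then $X\cap Y=\varnothing$ while $X\,(\$)\,Y=\{x\}$, which does refute (i). Everything else in that block is fine.
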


\begin{proof} Like before, write $\downarrow$ to denote $\downarrow_{J(D)}$. The operations $(\#)$, $(\$)$, $\#$ and $\$$ are as defined in the proof of the preceding theorem. \newline

To show the governing inequalities, the following can be said. From the setup we can see that $X (\#) Y \subseteq X (\$) Y \subseteq X \cup Y$, $X (\#) Y \subseteq X \cap Y \subseteq X \cup Y$, and $A^M \cap B^M \subseteq (A^M \cap B) \cup (A \cap B^M) \subseteq A \cap B$. In particular we obtain, $(A$ $\#$ $B)^M \subseteq (A$ $\$$ $B)^M \subseteq A \cap B$ and $\bigvee (A$ $\#$ $B) \leq \bigvee (A$ $\$$ $B) \leq \bigvee (A \cap B)$. \newline
. \newline
Looking at the first inequality between $(\$)$ and $\curlywedge$ : \newline

Say that $J(D)$ is forest-like if it is impossible to find elements $x,y,z \in J(D)$ where $y \parallel z$ and $x \leq y \wedge z$. If $J(D)$ were forest-like then for all $X,Y \in \mathcal{P}(J(D))$ $X \cap Y \subseteq X$ $(\$)$ $Y$ for if $x \in (X \cap Y) \backslash (X$ $(\$)$ $Y)$ then $x \notin X^M \cup Y^M$ implying that $J(D)$ is not forest-like. If $J(D)$ were not forest-like let $x,y,z \in J(D)$ where $y \parallel z$ and $x \leq y \wedge z$. Set $X = \{x,y\}$ and $Y = \{x,z\}$, then $X \cap Y = \{x\} \neq \varnothing = X$ $(\$)$ $Y$. \newline

Assume now that $J(D)$ is forest-like and let $A,B,C \in \mathcal{O}(J(D))$. Then it is enough to suppose that $(A$ $\$$ $(B$ $\$$ $C))^M \neq$ $(A^M \cap B \cap C) \cup (A \cap B^M \cap C) \cup (A \cap B \cap C^M)$. Firstly, $B^M \cap (A$ $\$$ $(B$ $\$$ $C))^M \subseteq B^M \cap (A \cap B \cap C) = A \cap (B^M \cap C) \subseteq B^M \cap (A$ $\$$ $(B$ $\$$ $C))^M$ and similarly for $A \cap B \cap C^M$. Secondly, $A^M \cap (A$ $\$$ $(B$ $\$$ $C))^M \subseteq A^M \cap (B \cap C)$. So it is impossible that for all $a \in A^M \cap B \cap C$ one can find a $b \in B^M$ and a $c \in C^M$ where $a \leq b \leq c$ or $a \leq c \leq b$. \newline

But then there is a $\{x,y,z\} \subseteq J(D)$ where $y \parallel z$ in $J(D)$, $x < y$ and $x < z$; the latter being impossible, and the former leading to a contradiction. To see that $\$ = \cap$ note that $(A \cap B \cap C)^M = (A^M \cap B \cap C) \cup (A \cap B^M \cap C) \cup (A \cap B \cap C^M) = (A$ $\$$ $B$ $\$$ $C)^M$, then set $B = C$. \newline

Now assume that $J(D)$ is not forest-like, and let $x,y,z \in J(L)$ where $y \parallel z$ and $x \leq y \wedge z$. Set $A = $ $\downarrow \{x\}$, $B =$ $\downarrow \{y\}$, and $C = $ $\downarrow \{z\}$. Then $(A$ $\$$ $B)$ $\$$ $C = A \neq \varnothing = A$ $\$$ $(B$ $\$$ $C)$ and $B$ $\$$ $C = \varnothing \subset A = B \cap C$. \newline
 \newline
Looking at the second inequality between $(\$)$ and $\curlywedge$ : \newline

Firstly, if $D$ is a boolean lattice then $(\#) = (\$) = \curlywedge$ and $\# = \$ = \wedge$. In particular both $(\#)$ and $(\$)$ would be idempotent. So assume that $D$ is not a boolean lattice, then let $x, y \in J(D)$ satisfy $x < y$ : \newline

Set $X = \{x\}$ and $Y = \{y\}$, then $X \cap Y = \varnothing \subset \{x\} = X$ $(\$)$ $Y$. Now set $X = \{x, y\}$, then $X$ $(\#)$ $X = X$ $(\$)$ $X = \{y\} \neq X$. At last, let $A =$ $\downarrow \{x\}$ and $B =$ $\downarrow \{y\}$. Then $A$ $\#$ $B = \varnothing \subset$ $\downarrow \{x\} = A$ $\$$ $B$.

\end{proof}


\begin{thebibliography}{8}


\bibitem{CLA}  George Gr\"{a}tzer and E. Tam\'as Schmidt, Characterizations of congruence lattices of abstract algebras, \textit{Acta Sci. Math. (Szeged)} \textbf{24} (1963), 34-59.

\bibitem{ILO} Brian A. Davey and Hilary A. Priestley, \textit{Introduction to Lattices and Order, Second Edition}, Cambridge University Press (2002).

\bibitem{CFL} George Gr\"atzer, \textit{The Congruences of a Finite Lattice, A Proof-by-Picture Approach}, Birkh\"auser (2006).

\bibitem{ISLG} P\'eter P\'al P\'alfy and Pavel Pud\'lak, Congruence lattice of finite algebras and intervals in subgroup lattices of finite groups, \textit{Algebra Universalis,} \textbf{11} (1980) 22-27.

\bibitem{CLFA} William J. DeMeo, Congruence Lattices of Finite Algebras, \textit{PhD thesis, University of Hawai'i at M$\bar{a}$noa} (May 2012).

\bibitem{CLFUA} Joel David Berman, Congruence Lattices of Finite Universal Algebras, \textit{PhD thesis, University of Washington} (1970).

\bibitem{CLL} George Gr\"atzer, \textit{The Dilworth Theorems, Selected Papers of Robert P Dilworth}, Birkh\"auser (1990) 460-464.

\bibitem{SDCLP} Friedrich Wehrung, A solution to Dilworth's congruence lattice problem, \textit{Advances in Mathematics} 216 (2007) 610-625.


\end{thebibliography}
\end{document}